\newcommand{\bv}{\boldsymbol v}
\newcommand{\bu}{\boldsymbol u}
\newcommand{\bff}{\boldsymbol f}
\newcommand{\bw}{\boldsymbol w}
\newcommand{\bz}{\boldsymbol z}
\renewcommand{\div}{\mathrm{div}\,}
\newcommand{\supp}{\mathrm{supp}\,}
\newcommand{\eps}{\varepsilon}
\newcommand{\R}{\mathbb R}
\newcommand{\Rn}{\mathbb R^n}
\newcommand{\Rm}{\mathbb R^m}
\DeclareMathOperator*{\osc}{osc}
\newcommand{\res}{\,\raisebox{-.127ex}{\reflectbox{\rotatebox[origin=br]{-90}{$\lnot$}}}\,}
\newtheorem*{thm}{Theorem}
\newtheorem*{lemma}{Lemma}
\newtheorem*{cor}{Corollary}
\theoremstyle{remark}
\newcommand{\ignore}[1]{{}}
\author{Lorenzo Giacomelli$^*$, Micha{\l} {\L}asica$^{*\dagger}$ \\
}
\title{A local estimate for vectorial total variation minimization\\ in one dimension}
\date{\today}
\begin{document}
 \maketitle

 \begin{abstract}
 	Let $\bu$ be the minimizer of vectorial total variation ($VTV$) with $L^2$ data-fidelity term on an interval $I$. We show that the total variation of $\bu$ over any subinterval of $I$ is bounded by that of the datum over the same subinterval. We deduce analogous statement for the vectorial total variation flow on $I$.
\end{abstract}

{\small
\noindent Key words and phrases: multichannel data, signal denoising, restoration, total variation flow, curves \\
AMS MSC 2010: 49N60, 35A23, 35K51, 35K92, 94A12}

\thispagestyle{fancy}
\fancyhf{}
\lfoot{\small $^*$ SBAI Department, Sapienza University of Rome \\
	$^\dagger$ Corresponding author.\\ Institute of Applied Mathematics and Mechanics, University of Warsaw\\ 
ul.\;Banacha 2, 02-097 Warszawa, Poland \\
\tt{lasica@mimuw.edu.pl}} 
\section{Introduction}
\noindent
Let $\Omega$ be a bounded domain in $\Rm$, $m\ge 1$. The (vectorial) total variation functional
$TV \colon L^2(\Omega, \Rn) \to [0, +\infty]$, $n\geq1$ is defined by
\begin{equation} \label{deftv}
TV(\bu) = \sup\left\{\int_\Omega \bu \cdot \div \boldsymbol \varphi \colon\boldsymbol \varphi \in C^1_c(\Omega, \mathbb R^{m \times n}), |\boldsymbol \varphi| \leq 1\right\} = \left\{ \begin{array}{ll} |D \bu|(\Omega) & \text{if }\bu \in BV(\Omega, \Rn), \\ + \infty & \text{otherwise.}\end{array}\right.
\end{equation} 
Here and in the following, if $\bv$ is a vector in a Euclidean space (e.\,g.\;$\mathbb R^{m \times n}$), $|\bv|$ denotes its Euclidean norm, while if $\bw$ is a vector measure on $\Omega$, $|\bw|$ denotes its variation with respect to the Euclidean norm.

Given $\bff\in L^2(\Omega, \Rn)$ and $\lambda>0$, the $VTV$-$L^2$ denoising model \cite{rof,chambollelions,snakes,blomgrenchan} amounts to solving the following minimization problem:
\begin{equation} \label{l2tv}
\min_{\bw\in L^2(\Omega, \Rn)}E(\bw), \qquad E(\bw):= TV(\bw) + \tfrac1{2\lambda} \int_\Omega |\bw-\bff|^2.
\end{equation}
By strict convexity of $E$, \eqref{l2tv} has exactly one solution $\bu\in BV(\Omega, \Rn)$. It is clear that 
\begin{equation}\label{energyest}
TV(\bu) \leq TV(\bff).
\end{equation}
Here we consider the setting where $m=1$ and $\Omega = I = ]a,b[$ is an interval, focusing on vectorial case $n\geq 2$. This corresponds to denoising one-dimensional multichannel data. A typical source of such data is tracking orientation of objects (cameras, proteins, aircraft, etc.) in time (see \cite{lellmann, weinmanndemaretstorath} and references in \cite{weinmanndemaretstorath}). The orientation data are constrained to a Riemannian manifold, which however can be locally approximated by a Euclidean space via the Riemannian logarithmic map (see also discussion at the end of this section). It turns out that in the one-dimensional setting, estimate \eqref{energyest} can be localized in an unusually robust way.
\begin{thm} \label{thm-denoising}
Let $\bff\in L^2(I, \Rn)$. Suppose that $\bff \in BV(U, \Rn)$ for an open interval $U\subset I$. Then the minimizer $u$ of $E$ satisfies $|\bu'| \leq |\bff'|$ in the sense of Borel measures on $U$, i.\,e.
	\begin{equation} \label{theest}
|\bu'|(A) \leq |\bff'|(A) \quad \text{for any Borel } A \subset U.
	\end{equation}
\end{thm}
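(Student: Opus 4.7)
The plan is to use the Euler--Lagrange characterization of $\bu$ via convex duality. Applying Fenchel--Rockafellar duality to \eqref{l2tv} produces a dual variable $\bz \in W^{1,2}(I,\Rn)$ with $\bz(a)=\bz(b)=0$ and $|\bz|\leq 1$ a.e., together with the primal--dual relation $\bu = \bff + \lambda \bz'$ a.e.\ on $I$ and the alignment $\bz = \mathrm{d}\bu'/\mathrm{d}|\bu'|$ at $|\bu'|$-a.e.\ point; equivalently, $\bz \cdot \dd\bu' = \dd|\bu'|$ as positive Borel measures on $I$.

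Under the hypothesis $\bff \in BV(U,\Rn)$, and since $\bu \in BV(I,\Rn)\subset BV(U,\Rn)$, the function $\bz' = (\bu-\bff)/\lambda$ lies in $BV(U,\Rn)$; hence $\bz''$ is a vector Radon measure on $U$ and $\bu' = \bff' + \lambda \bz''$ as vector measures on $U$. Contracting with $\bz$ yields the key identity
\begin{equation*}
\dd|\bu'| = \bz \cdot \dd\bff' + \lambda\, \bz \cdot \dd\bz'' \quad \text{on } U.
\end{equation*}
Since $|\bz|\leq 1$, the first term on the right satisfies $\bz \cdot \dd\bff' \leq \dd|\bff'|$ as signed measures, so proving \eqref{theest} is reduced to showing that the restriction of $\bz \cdot \dd\bz''$ to $\supp |\bu'|$ is a non-positive measure.

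This non-positivity is the heart of the argument. By a $BV$ chain rule, $\bz \cdot \dd\bz'' = \tfrac{1}{2}(|\bz|^2)'' - |\bz'|^2 \dd x$ as measures, where $(|\bz|^2)''$ denotes the distributional second derivative of the scalar function $g := |\bz|^2 \in W^{1,2}(I) \cap C(I)$. Now $g\leq 1$ with equality on $\{|\bz|=1\}\supset \supp|\bu'|$, so $g$ attains its global maximum at every point of $\supp|\bu'|$. On the open set $\{|\bz|=1\}^\circ$, $g\equiv 1$ locally forces $\bz\perp\bz'$, whence $\bz\cdot \bz'' = -|\bz'|^2\leq 0$ as a Lebesgue-absolutely-continuous density. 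At every point $x_0 \in \supp|\bu'|$ at which $g'$ has distinct one-sided limits, the local-maximum condition $g'(x_0^-)\geq 0 \geq g'(x_0^+)$ forces the atom of $g''$ at $x_0$ to be non-positive; a parallel analysis controls any Cantor component of $g''$ supported on $\{g=1\}$. Integrating the resulting measure inequality over a Borel set $A\subset U$, and using $|\bu'|(A) = |\bu'|(A\cap \supp|\bu'|)$, concludes the proof of \eqref{theest}.

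The main obstacle I foresee is the rigorous measure-theoretic treatment of the singular, and in particular the Cantor, part of $g''$ concentrated on the maximum set $\{g=1\}$. The intuition ``the second derivative is non-positive at a maximum'' is classical in the $C^2$ setting, but extending it to $W^{1,2}$ functions with merely $BV$ first derivative requires careful use of the fine $BV$ structure of $\bz$ and $\bz'$ near the interface between the active set $\{|\bz|=1\}$ and its complement.
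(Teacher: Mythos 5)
Your argument is correct in substance but takes a genuinely different route from the paper. The paper works with the elliptic regularization $E_\eps$ of \eqref{def-Eeps}, tests the Euler--Lagrange system \eqref{ELeps} with $-\left(\varphi^2|\bu'|_\eps^{p-1}\bz\right)'$ for a cutoff $\varphi$, exploits the sign identity \eqref{zzx} to obtain the local bound \eqref{approxbound}, and then sends $\eps\to0$, $p\to1^+$, $R\to r^+$ before removing the smoothness of $\bff$ by mollification and a covering argument. You instead stay at the level of the unregularized problem and shift all the work onto the dual variable $\bz$, reducing \eqref{theest} to a measure-theoretic maximum principle for $g=|\bz|^2$ at its maximum set $K=\{|\bz|=1\}\supset\supp|\bu'|$. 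This is more direct, needs no approximation of $\bff$, and yields the sharper intermediate inequality $|\bu'|(A)\le \int_{A\cap K}\bz\cdot\dd\bff'\le|\bff'|(A\cap K)$; the price is reliance on convex duality and on the fine one-dimensional structure of $BV$ functions, which is harder to transport to the manifold-constrained or Dirichlet settings that motivate the paper's choice of an approximation scheme.

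The step you flag as the main obstacle does go through, and in a stronger form: the \emph{entire non-atomic part} of $Dg'$ restricted to $K$ vanishes, which simultaneously handles the Cantor part and the absolutely continuous part on the (possibly fat, nowhere dense) portion of $K$ not covered by your discussion of $\mathrm{int}\,K$ and of the atoms. Indeed, $g'\in BV(U)$ has one-sided limits everywhere, and $g(x)-g(x_0)=\int_{x_0}^{x}g'\le0$ for $x_0\in K$ gives $g'(x_0^-)\ge0\ge g'(x_0^+)$; hence at every non-atom $x_0\in K$ both one-sided limits are $0$. Given a compact $E\subset K$ containing no atoms of $Dg'$ and $\eta>0$, pick an open $V\supset E$ with $|Dg'|(V\setminus E)<\eta$ and, in each component of $V$ meeting $E$, set $e_i=\inf (E\cap\text{component})$, $f_i=\sup(E\cap\text{component})$; then $Dg'([e_i,f_i])=g'(f_i^+)-g'(e_i^-)=0$, so $|Dg'(E)|=\bigl|\sum_i Dg'([e_i,f_i]\setminus E)\bigr|\le|Dg'|(V\setminus E)<\eta$, and inner regularity finishes the claim. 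Combined with the non-positive atoms, this gives $g''\res K\le0$, hence $\bz\cdot\dd\bz''=\tfrac12 g''-|\bz'|^2\dd x\le0$ on $K$, completing your proof.
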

Let us note a simple, but already interesting fact that follows trivially from the Theorem: for any $\bff \in L^2(I, \mathbb R^n)$ and any open interval $U \subset I$ there holds 
\[TV_U(\bu)\leq TV_U(\bff),\]
where $TV_U$ is defined as in \eqref{deftv}, with $\Omega$ changed to $U$. 

Another immediate consequence of the Theorem is that if $\bff$ belongs to any subspace of $BV(I, \Rn)$ defined in terms of a bound on $|\bff'|$, such as $W^{1,p}(I, \Rn)$, $p\in [1,\infty]$ or $SBV(I, \Rn)$, then $\bu$ belongs to the same subspace.

A closely related, natural way of diminishing $TV$ is to follow its $L^2$-steepest descent flow
\begin{equation}\label{tvflow}
\begin{aligned}  S\colon &[0,\infty[ \times L^2(\Omega, \Rn) \ni (t, \bu_0)\mapsto \bu(t, \cdot) \in L^2(\Omega, \Rn), \\
 &\tfrac{\partial \bu}{\partial t}(t,\cdot) \in - \partial TV(\bu(t,\cdot)) \text{ for } t>0, \quad \bu(0, \cdot) = \bu_0. 
 \end{aligned}
\end{equation}
This is the so-called \emph{total variation flow}. It can be computed via the exponential formula
\begin{equation} \label{expform}
	\bu(t, \cdot) = S(t,\bu_0) = \lim_{n \to \infty} \left(R_{t/n}\right)^{n} (\bu_0)
	\end{equation}
 for $t>0$, $\bu_0 \in L^2(\Omega, \Rn)$, where $R_{t/n}(\boldsymbol w)$ is the minimizer of $E$ with $\lambda = t/n$, $\bff = \boldsymbol w$ \cite[Corollary 4.4]{operateurs}.
The total variation flow has been
extensively investigated by mathematicians in parallel to problem \eqref{l2tv} (see \cite{acmbook} and other references cited below). An immediate consequence of the Theorem, \eqref{expform} and lower semicontinuity of $TV$ is

\begin{cor}
Let $\bu_0 \in L^2(I, \Rn)$ and let $\bu$ be as in (\ref{tvflow}). Suppose that $\bu_0 \in BV(U, \Rn)$ for an open interval $U\subset I$. Then
$|\frac{\partial\bu}{\partial x}(t,\cdot)| \leq |\frac{\partial \bu_0}{\partial x}|$ in the sense of Borel measures on $U$, i.\,e.
	\begin{equation} \label{theestf}
	\left|\tfrac{\partial\bu}{\partial x}(t,\cdot)\right|(A) \leq \left|\tfrac{\partial\bu_0}{\partial x}\right|(A) \quad \text{for any Borel }A \subset U.
	\end{equation}
\end{cor}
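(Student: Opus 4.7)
The plan is to combine the iterative structure of the exponential formula \eqref{expform} with Theorem~\ref{thm-denoising} applied at each step, and then pass to the limit using the lower semicontinuity of $TV$ together with the regularity of Radon measures. Since the inequality in the Corollary is a pointwise comparison of measures on Borel sets, the only delicate point is how to transfer an estimate that behaves nicely under $L^2$-limits (namely lower semicontinuity on open sets) to an estimate on arbitrary Borel subsets of $U$.

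First, I would fix $t>0$, $n \in \mathbb N$, and denote $\bu_n^k := (R_{t/n})^k(\bu_0)$ for $k = 0, \dots, n$, with $\bu_n^0 = \bu_0$. Since $\bu_0 \in BV(U, \Rn)$, iterating Theorem~\ref{thm-denoising} yields that $\bu_n^k \in BV(U, \Rn)$ and, for every Borel $A \subset U$,
\begin{equation*}
|(\bu_n^k)'|(A) \leq |(\bu_n^{k-1})'|(A) \leq \dots \leq |\bu_0'|(A).
\end{equation*}
In particular $|(\bu_n^n)'|(A) \leq |\bu_0'|(A)$ for every $n$ and every Borel $A \subset U$. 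By \eqref{expform}, $\bu_n^n \to \bu(t,\cdot)$ in $L^2(I, \Rn)$ as $n\to\infty$, so in particular $\bu_n^n \to \bu(t,\cdot)$ in $L^1_{loc}(U, \Rn)$.

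Next, for any open $V \subset U$, the $L^1_{loc}$-lower semicontinuity of the total variation on open sets gives
\begin{equation*}
\left|\tfrac{\partial \bu}{\partial x}(t,\cdot)\right|(V) \leq \liminf_{n \to \infty} |(\bu_n^n)'|(V) \leq |\bu_0'|(V).
\end{equation*}
Finally, since $|\bu_0'|$ restricted to $U$ is a finite Radon measure and $\bu(t,\cdot) \in BV(U, \Rn)$, both measures are outer regular, so for any Borel $A \subset U$,
\begin{equation*}
\left|\tfrac{\partial \bu}{\partial x}(t,\cdot)\right|(A) = \inf_{V \supset A, V \text{ open}} \left|\tfrac{\partial \bu}{\partial x}(t,\cdot)\right|(V) \leq \inf_{V \supset A} |\bu_0'|(V) = |\bu_0'|(A),
\end{equation*}
which is \eqref{theestf}.

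I do not foresee a real obstacle here: the strategy is standard and is essentially dictated by the hint in the text. The one step worth double-checking is the transition from open to Borel sets, which relies only on the finiteness of $|\bu_0'|$ on $U$ (guaranteed by the hypothesis $\bu_0 \in BV(U, \Rn)$) and on outer regularity of Radon measures on the locally compact space $U$.
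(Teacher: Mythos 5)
Your proposal is correct and follows exactly the route the paper indicates (iterating the Theorem along the exponential formula \eqref{expform}, then lower semicontinuity of $TV$ on open sets and outer regularity to pass to Borel sets); the paper states the Corollary as an immediate consequence without writing out these details. No gaps.
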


Inequality (\ref{theest}) (resp.\;(\ref{theestf})) is an example of a  \emph{completely local} estimate for a solution to a variational problem (evolutionary equation). By this we mean that the value of a \emph{local} functional on the solution space (in our case $L^2(I,\Rn)$) evaluated on the solution is estimated by the value of the same functional on the datum. Several results like that, which we now briefly discuss, are already known for steepest descent flows of \emph{scalar} $TV$ (i.\,e.\;for $n=1$) and similar functionals. Usually the authors first obtain the estimates for the resolvent variational problem \eqref{l2tv} or its analog. For consistency, we discuss them in the language of the flow.

In \cite{bonfortefigalli}, the authors consider scalar total variation flow on an interval $I$. They analyze the evolution of step functions (which is a class preserved by the flow) and use $L^q$-contractivity of the flow. In this way, they prove that if $u$ is the solution starting with initial datum $u_0 \in BV(I)$, the size of jumps of $u(t,\cdot)$, $t>0$, is not bigger than the size of corresponding jumps of $u_0$ and that
\begin{equation}\label{osc}\osc_{J} u(t,\cdot) \leq \osc_{J} u_0 \quad \text{for }t>0
\end{equation}
on any open interval $J\subset I $ over which $u_0$ is continuous (recall that $\osc_J v = \sup_J v - \inf_J v$ for $v \in C(J)$). The authors remark that this implies preservation of $W^{1,1}(I)$ and $C^{0,\alpha}(I)$, $\alpha \in ]0,1]$ regularity by the flow. We note that preservation of $C^{0,\alpha}(\Omega)$ class is also known for the scalar total variation flow on a convex domain $\Omega \subset \mathbb R^m$ \cite{CaChNoreg, mercier}. On the other hand, preservation of $W^{1,1}(I)$ regularity has recently been obtained for gradient flows of more general functionals of linear growth at infinity on an interval \cite{nakayasurybkaw11}.

In \cite{bcng}, the authors consider the gradient flow of a functional $\bu \mapsto \int_\Omega |\div \bu|$, where $\Omega$ is a bounded domain in $\Rm$. For a solution $\bu$ starting with $\bu_0 \in L^2(\Omega, \Rm)$ such that $\div \bu_0$ is a Radon measure on $\Omega$, they prove that
\[(\div \bu(t, \cdot))_\pm \leq (\div \bu_0)_\pm \quad \text{for }t>0\]
in the sense of measures. This coincides with our result in the case $m=n=1$. Their technique is based on considering the dual problem to the variational semi-discretization of the flow and involves a comparison principle. In the essential lemma, they show certain monotonicity property of level sets of the solution to the dual problem with respect to the parameter of discretization. This does not seem to be adaptable to the case where the divergence is a vectorial quantity (which would cover our result for $m=1$). On the other hand, at least at the formal level our method can be adapted to the case of gradient flow of $\int_\Omega |\div \bu|$ with $\bu\colon \Rm\supset \Omega\to \R^{m\times n}$.
%
%

We mention that in the case of isotropic {\em scalar} total variation flow on an $m$-dimensional domain $\Omega$, a completely local estimate of type \eqref{theestf} is available \cite{jalalzaijump} for the jump part of the gradient of the solution $u$ starting with $u_0 \in BV(\Omega) \cap L^m(\Omega)$:
\begin{equation} \label{jumpest}
|D u(t, \cdot)|\res J_{u(t,\cdot)} \leq |D u_0|\res J_{u_0}
\end{equation}
as Borel measures. Our main result contains in particular the first extension of \eqref{jumpest} to the vectorial case, $n>1$. It is also well known that such an estimate does not hold in general for the absolutely continuous part of $|D u|$. For a counterexample, one can take as $u_0$ the characteristic function of a \emph{non-calibrable} convex set, such as a square in the plane \cite{altercc}. To our knowledge, it is an open question whether an estimate analogous to \eqref{jumpest} holds for the remaining Cantor part.

On the other hand, for the orthotropic scalar total variation flow, local estimate of form \eqref{jumpest} may not hold, as the jump set of solution may expand compared to the initial datum \cite[Example 4]{moll}. In fact, jumps can appear even if they are absent in the initial datum \cite[Example 3]{tetris}.

Our technique is based on local integral estimates and a careful choice of the approximating scheme (see Section 2). For this reason, the Theorem and the Corollary can be transferred to different (e.\,g.\;Dirichlet) boundary conditions. We also expect that a similar technique can be used to prove analogous versions of the results when the range of $\bu$ is constrained to a Riemannian submanifold in $\mathbb R^n$. In fact, the existence theory for constrained total variation flows (also known as \emph{$1$-harmonic flows}) is at present limited to non-generic targets \cite{giacomellimazonmoll2013,giacomellimazonmoll2014,dicastrogiacomelli} (not including the orientation space $SO(3)$) and Lipschitz initial data \cite{gigakashimayamazaki, giacomellilasicamoll}. We believe that an a priori estimate of form \eqref{theestf} will provide a convenient tool for generalizing the existence theory to the case of initial data of bounded variation into generic target manifolds.

\section{The proof}

The proof of the Theorem is based on a uniform estimate for a family of regularizations of $E$:
\begin{equation}
\label{def-Eeps}
E_\eps(\bu):= \int_I \left(\tfrac1{2\lambda}|\bu-\bff|^2 +|\bu'|_\eps +\tfrac{\eps^2}{2} |\bu'|^2\right), \qquad |\bv|_\eps:= \sqrt{\bv^2+\eps^2}.
\end{equation}
For any $\bff\in L^2(I)$, $E_\eps$ has a unique minimizer $\bu_\eps \in H^1(I)$ (at this point, we stop specifying the codomain in the notation for function spaces). As $E_\eps$ is smooth and uniformly convex for a fixed $\eps$, a standard variational argument yields that $\bu_\eps\in H^2(I)$ (hence also $\bu_\eps \in C^1(\overline I)$) and $\bu_\eps$ is a strong solution to the Euler-Lagrange system
\begin{equation}\label{ELeps}
\tfrac{1}{\lambda} (\bu_\eps-\bff) = \left(\frac{\bu_\eps'}{|\bu_\eps'|_\eps}\right)' +\eps^2\bu_\eps'' \ \text{ in } I, \qquad \bu_\eps'=0\ \text{ on } \partial I.
\end{equation}

For $x_0 \in \mathbb R$ and $r>0$, let us denote by $B_r(x_0)$ the \emph{closed} interval $[x_0-r,x_0+r]$.

 \begin{lemma} 
Let $\bff\in H^{1}(I)$ and let $\bu_\eps$ be the unique minimizer of $E_\eps$. Let $x_0 \in I$ and $0<r<R$ such that $B_R(x_0) \subset I$. Then
\begin{equation}\label{global-j}
\int_I|\bu_\eps'|_\eps^{p} \le p \eps^p |I| + \int_I |\bff'|^p \quad\text{for all } p\in [1,2]
\end{equation}
and
\begin{equation} \label{approxbound}
\limsup_{\eps\to 0} \int_{B_r(x_0)}|\bu_\eps'|_\eps^{p} \le  \int_{B_R(x_0)} |\bff'|^p\quad\text{for all } p\in ]1,2].
\end{equation}
  \end{lemma}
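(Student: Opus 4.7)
The plan is to differentiate the Euler--Lagrange equation \eqref{ELeps} and test the result against a weighted tangent. Write $w = |\bu_\eps'|_\eps$, let $\psi(\bv) = |\bv|_\eps$ and $H(\bv) = \tfrac{1}{p}|\bv|_\eps^p$ (both $C^2$ and convex on $\Rn$), and set
\[
\bh := w^{p-2}\bu_\eps' = \nabla H(\bu_\eps'), \qquad \bz := \frac{\bu_\eps'}{w} + \eps^2\bu_\eps' = \nabla\psi(\bu_\eps') + \eps^2\bu_\eps',
\]
so that \eqref{ELeps} reads $\tfrac{1}{\lambda}(\bu_\eps - \bff) = \bz'$. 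Under the hypothesis $\bff\in H^1(I)$, standard bootstrapping in \eqref{ELeps} gives $\bu_\eps\in H^3(I)$, so we may differentiate to obtain $\tfrac{1}{\lambda}(\bu_\eps' - \bff') = \bz''$. The chain rule yields $\bz' = (D^2\psi(\bu_\eps') + \eps^2\mathrm{Id})\bu_\eps''$ and $\bh' = D^2 H(\bu_\eps')\bu_\eps''$; both matrices are symmetric, positive semidefinite, and commute (being diagonalized by the decomposition of $\Rn$ into $\bu_\eps'$ and its orthogonal complement), so their product is also positive semidefinite and $\bz'\cdot\bh'\geq 0$ pointwise.

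For \eqref{global-j}, I test the differentiated EL with $\bh$. The boundary condition $\bu_\eps'|_{\partial I}=0$ makes $\bh|_{\partial I}=0$, so integration by parts gives
\[
\tfrac{1}{\lambda}\int_I (\bu_\eps' - \bff')\cdot\bh = \int_I\bz''\cdot\bh = -\int_I\bz'\cdot\bh' \leq 0.
\]
Expanding $\bu_\eps'\cdot\bh = w^p - \eps^2 w^{p-2}$, bounding $\eps^2 w^{p-2}\leq\eps^p$ (using $w\geq\eps$ and $p\leq 2$), and applying Young's inequality with exponents $p/(p-1),p$ to $w^{p-2}\bu_\eps'\cdot\bff'\leq w^{p-1}|\bff'| \leq \tfrac{p-1}{p}w^p + \tfrac{1}{p}|\bff'|^p$ yields \eqref{global-j} after absorption. (At $p=1$, no absorption is needed, since $(\bu_\eps'/w)\cdot\bff'\leq|\bff'|$ directly.)

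For \eqref{approxbound}, pick a cutoff $\phi\in C^2_c(I)$ with $\phi\equiv 1$ on $B_r(x_0)$, $\supp\phi\subset B_R(x_0)$, $0\leq\phi\leq 1$, and a large integer $q$. Testing the differentiated EL with $\phi^{2q}\bh$ produces, besides the $\phi^{2q}$-weighted analogues of the terms above, a cross term $-2q\int\phi^{2q-1}\phi'\,\bz'\cdot\bh$. The crux is the perfect-derivative identity
\[
\bz'\cdot\bh = \eps^2\,\frac{d}{dx}G(w), \qquad G(w) := \frac{w^{p-3}}{p-3} + \frac{w^p}{p},
\]
which holds because $\bh$ is parallel to $\bu_\eps'$ while the $\bu_\eps'$-eigenvalue of $D^2\psi+\eps^2\mathrm{Id}$ is precisely $\eps^2(w^{-3}+1)$. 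One more integration by parts (the boundary term vanishes since $\phi$ is compactly supported in $I$) turns the cross term into $2q\eps^2\int(\phi^{2q-1}\phi')'\,G(w)\,dx$; using $w\geq\eps$ on the $w^{p-3}$ piece and \eqref{global-j} on the $w^p$ piece bounds this by $C\eps^{p-1}|I| + C\eps^2\int_I|\bff'|^p + C\eps^{p+2}|I|$, which is $o(1)$ as $\eps\to 0$ for $p>1$. Arguing exactly as in the global case then gives
\[
\int_I\phi^{2q}w^p \leq p\eps^p|I| + \int_I\phi^{2q}|\bff'|^p + o_\eps(1),
\]
and since $\phi\equiv 1$ on $B_r(x_0)$ and $\supp\phi\subset B_R(x_0)$, taking $\limsup_{\eps\to 0}$ yields \eqref{approxbound}.

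The main obstacle is the cross term in the cutoff argument: without the perfect-derivative identity $\bz'\cdot\bh = \eps^2(G\circ w)'$, it would contain $\bu_\eps''$ with no small prefactor and could not be controlled by \eqref{global-j}. The identity reflects the $\eps^2$-degeneration of $D^2\psi(\bv)+\eps^2\mathrm{Id}$ along the tangent direction $\bv$, a one-dimensional feature of the problem.
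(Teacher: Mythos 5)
Your proposal is correct and follows essentially the same route as the paper: testing the Euler--Lagrange system against $-\left(\varphi^2|\bu_\eps'|_\eps^{p-1}\tfrac{\bu_\eps'}{|\bu_\eps'|_\eps}\right)'$, as the paper does, is (after one integration by parts) exactly your pairing of the differentiated equation with a cutoff times $\bh=|\bu_\eps'|_\eps^{p-2}\bu_\eps'$, and the identity \eqref{zzx} encodes the same $\eps^2$-degeneracy along the tangent direction that underlies your perfect-derivative formula. The only genuine divergence is in the cutoff cross term, which the paper absorbs via Cauchy--Schwarz into the negative $(p-1)$-weighted square term (hence the $\tfrac{1}{p-1}$ constants in \eqref{pnorm3}) rather than integrating by parts a second time as you do; both routes give an $O(\eps^{p-1})$ error that forces $p>1$ in \eqref{approxbound}, so the two arguments are interchangeable.
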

 \begin{proof}
In the following calculations we will omit the index $\eps$. Let $\varphi$ be a Lipschitz function on $I$. Given $p \geq 1$, we multiply \eqref{ELeps} by $-\left(\varphi^2 |\bu'|_\eps^{p-1}\bz\right)'$, where $\bz = \frac{\phantom{|}\bu'\phantom{|_\eps}}{|\bu'|_\eps}$,
obtaining (after one integration by parts on the l.\,h.\,s.)
\begin{multline}
\label{pnorm}
J:=\tfrac{1}{\lambda}\int_I \varphi^2 |\bu'|_\eps^{p-1}\bz\cdot (\bu'-\bff') =  -2\int_I \varphi\, \varphi' |\bu'|_\eps^{p-1} \bz \cdot (\bz'+\eps^2\bu'') \\ - (p-1) \int_I \varphi^2 |\bu'|_\eps^{p-3}\bu' \cdot \bu'' \, \bz \cdot (\bz'+\eps^2\bu'') - \int_I \underbrace{\varphi^2 |\bu'|_\eps^{p-1} \bz'\cdot (\bz'+\eps^2\bu'')}_{\ge 0}.
\end{multline}
We have
\begin{equation}\label{zzx}
\bz \cdot \bz' = \tfrac{1}{2} \left(|\bz|^2\right)' = \tfrac{1}{2}\left(\frac{|\bu'|^2}{|\bu'|^2 + \eps^2}\right)' \\
= \tfrac{1}{2} \left( 1 - \frac{\eps^2}{|\bu'|^2 + \eps^2}\right)' = 
\frac{\eps^2 \bu' \cdot \bu''}{|\bu'|_\eps^4}.
\end{equation}
Substituting \eqref{zzx} into \eqref{pnorm} and rearranging terms yields
\begin{multline} \label{pnorm2}
\tfrac{1}{\eps^2} J \leq -  2 \int_I \varphi\, \varphi' |\bu'|_\eps^{p-5} (1+|\bu'|_\eps^{3})\bu' \cdot \bu''- (p-1) \int_I \varphi^2 |\bu'|_\eps^{p-7} (1+|\bu'|_\eps^{3})\left| \bu' \cdot \bu''\right|^2.
\end{multline}
For $\varphi\equiv 1$, this implies that
$$
\int_I |\bu'|_\eps^{p-2}|\bu'|^2 \le \int_I |\bu'|_\eps^{p-2}\bu'\cdot \bff' \le \tfrac{p-1}{p} \int_I |\bu'|_\eps^{p} +\tfrac1p\int_I |\bff'|^p,
$$
and since
$$
|\bu'|_\eps^{p-2}|\bu'|^2=  |\bu'|_\eps^{p} -\eps^2|\bu'|_\eps^{p-2}
\stackrel{p\le 2} \ge|\bu'|_\eps^{p} -\eps^p
$$
we obtain \eqref{global-j}.

Let now $\varphi$ be defined by $\supp\varphi = B_R(x_0)$, $\varphi = 1$ in $B_r(x_0)$ and $|\varphi'| =
\frac{1}{R-r}$ in $B_R(x_0)\setminus B_r(x_0)$. Applying Cauchy-Schwarz inequality, for $p\in ]1,2]$ we obtain from \eqref{pnorm2} and \eqref{global-j} that
\begin{equation} \label{pnorm3}
 J \leq  \tfrac{\eps^2}{p-1} \int_I |\varphi'|^2 (|\bu'|_\eps^{p-3}+|\bu'|_\eps^{p})  \le \tfrac{2\eps^{p-1}}{(p-1)(R-r)} + \tfrac{\eps^2}{(p-1)(R-r)^2} \left(p \eps^p |I| + \int_I |\bff'|^p\right)
\end{equation}
On the other hand, arguing as in the proof of  \eqref{global-j} we have
\begin{equation}\label{conc}
J\ge \int_{B_r(x_0)}|\bu'|_\eps^{p} - p \eps^p |R| - \int_{B_R(x_0)} |\bff'|^p.
\end{equation}
Combining \eqref{pnorm3} and \eqref{conc} we obtain
$$
\int_{B_r(x_0)}|\bu'|_\eps^{p} \le  \int_{B_R(x_0)} |\bff'|^p + p \eps^p |R| + \tfrac{2\eps^{p-1}}{(p-1)(R-r)} + \tfrac{\eps^2}{(p-1)(R-r)^2} \left(p \eps^p |I| + \int_I |\bff'|^p\right),
$$
whence \eqref{approxbound}.
\end{proof}

With the Lemma at hand, it is easy to conclude.

\begin{proof}[Proof of the Theorem]

We first consider $\bff\in H^1(I)$. In view of \eqref{global-j}, up to a subsequence $\bu_\eps$ converges weakly in $H^1(I)$ and uniformly in $C(\overline I)$ to the unique minimizer $\bu \in H^1(I)$ of $E$ ($\Gamma$-convergence of $E_\eps$ to $E$ is straightforward). From \eqref{approxbound} we see that
\[
\int_{B_r(x_0)}|\bu'|^{p} \le  \int_{B_R(x_0)} |\bff'|^p \quad\text{for all } p\in ]1,2]
\]
whence, after limit passages $p \to 1^+$, $R\to r^+$,
    \begin{equation}\label{theboundk}
    \int_{B_r(x_0)} | \bu'| \leq \int_{B_r(x_0)} |\bff'|.
    \end{equation}
Finally, we remove the smoothness assumption on $\bff$. Modifying slightly the standard mollification technique, we obtain $(\bff_k) \subset C^\infty(\overline I)$ converging to $\bff$ strongly in $L^2(I)$ and strictly in $BV(U)$ as $k \to \infty$. Let $\bu_k$ denote the minimizer of
\[
E_k(\bu):=\tfrac1{2\lambda} \int_I |\bu-\bff_k|^2 + TV(\bu).
\]
As
\[ TV(\bu_k) \leq E_k(\bu_k) \leq E_k(0) = \tfrac1{2\lambda} \int_I |\bff_k|^2,\]
the sequence $\bu_k$ is uniformly bounded in $BV(I)$. We extract (and relabel) a subsequence such that
       \begin{equation*}
   \bu_k \stackrel{*}{\rightharpoonup} \bu \quad \text{in } BV(I),\qquad \bu_k \to \bu \quad \text{in } L^2(I)
   \end{equation*}
Since $\bff_k\to \bff\in L^2(I)$, $E_k$ $\Gamma$-converges to $E$, hence $\bu_k$ converges to its unique minimizer. By lower semicontinuity of total variation and strict convergence of $\bff_k$ on $U$, passing to the limit $k\to \infty$ in \eqref{theboundk} yields
   \begin{equation}\label{fgh}
   | \bu'|(B_r(x_0)) \leq |\bff'|(B_r(x_0))
   \end{equation}
   for any ball $B_r(x_0) \subset U$ such that (cf.\;\cite[Proposition 3.7]{afp})
   \begin{equation} \label{goodballs}
   |\bff'|(\partial B_r(x_0)) = 0.
   \end{equation}
Property \eqref{goodballs} is satisfied for every $x_0 \in U$ and almost every $r>0$ such that $B_r(x_0) \subset U$. Hence, by \cite[1.5.2., Corollary 1]{evansgariepy}, up to a set of zero $|\bu'|$ measure we can fill any open set $V \subset U$ with a countable family of disjoint closed balls $B_{r_j}(x_j)$ contained in $V$ and satisfying \eqref{goodballs}: hence
\[|\bu'|(V) = \sum_{j=1}^\infty |\bu'|\left(B_{r_j}(x_j)\right) \leq \sum_{j=1}^\infty |\bff'|\left(B_{r_j}(x_j)\right) \leq |\bff'|(V). \]
Finally, by virtue of \cite[1.1.1., Lemma 1]{evansgariepy}, given a Borel $A \subset I$ and $\delta>0$ we can find an open $V$ with $A \subset V$ and $|\bff'|(V\setminus A) \leq \delta$. Therefore,
   \[|\bu'|(A)\leq |\bu'|(V) \leq |\bff'|(V) \leq |\bff'|(A) +\delta.
\]
As $\delta>0$ is arbitrary, we are done.
\end{proof}

 \bibliographystyle{twojstyl}
 \bibliography{aniso}
\end{document}